\documentclass[envcountsame,envcountsect,runningheads,a4paper]{llncs}
\usepackage{graphicx,version,stmaryrd,amssymb,amsmath,url}

\usepackage{latexsym}
\usepackage{amsmath,amssymb,amsthm}
\usepackage{stmaryrd}
\usepackage{atbeginend}
\usepackage{enumitem}



\newcommand{\card}[1]{|#1|}

\newcommand{\fin}[1]{[#1]}

\newcommand{\mbb}{\mathbb}

\newcommand{\zz}{\mbb{Z}}

\renewcommand{\emptyset}{\varnothing}

\newcommand{\lt}{<}

\newcommand{\where}{\mid}

\newcommand{\mfrac}{\frac}
\renewcommand{\tfrac}[2]{#1 \mdiv #2}

\newcommand{\proves}{\vdash}

\newcommand{\cterm}{\underline}

\newcommand{\defd}{\mathrel{:=}}



\newcommand{\mdiv}{\mathbin{/}}

\newcommand{\Md}{\mathrm{Md}}

\newcommand{\sigdmd}{\rSigma_{\Md}^\mathrm{d}}

\newcommand{\eqnsdmd}{\rE_{\Md}^\mathrm{d}}

\newcommand{\Rat}{\mbb{Q}}

\newcommand{\Ratzd}{\Rat_0^\mathrm{d}}

\newcommand{\Compl}{\mbb{C}}

\newcommand{\Complzd}{\Compl_{\hsp{.1}0}^\mathrm{d}}

%



%

\newcommand{\set}[1]{\{ #1 \}}

\newcommand{\Union}[1]{\bigcup_{#1}}

\newcommand{\Natpos}{\mathbb{N}^+}

\newcommand{\displstretch}{\renewcommand{\arraystretch}{1.15}}

\newcommand{\tblstretch}{\renewcommand{\arraystretch}{1.25}}
\newcommand{\eqntblsize}{\small}


%

\newcommand{\rE}{\mathit{E}}
\newcommand{\rSigma}{\mathit{\Sigma}}

\newcommand{\hsp}[1]{\hspace*{#1em}}


\bibliographystyle{plain}

\title{Universality of Univariate Mixed Fractions in Divisive Meadows}

\author{%
  Jan A.~Bergstra \inst{1}
  \and 
  Inge Bethke \inst{1}
  \and 
  Dimitri Hendriks \inst{2}
}

\institute{
  Informatics Institute, Faculty of Science, University of Amsterdam \\
  Science Park~904, 1098~XH Amsterdam, The Netherlands \\
  \email{\{j.a.bergstra, i.bethke\}@uva.nl}
  \and
  Minstroom Research bv, Utrecht, The Netherlands \\
  \email{dimitrihendriks@gmail.com}
}

\begin{document}

\maketitle

\begin{abstract}
Univariate fractions can be transformed to mixed fractions in the equational theory of meadows of characteristic zero.
\end{abstract}

\section{Introduction}\label{sec:intro}
\emph{Meadows} are expansions of fields, and more generally of Von Neumann regular rings, 
with an inverse or division operation made total by having $0^{-1} = 0$ or $x/0=0$. 
If only an inverse is present we speak of an \emph{inversive} meadow 
and if only a division is present we speak of a \emph{divisive} meadow.
For axioms of meadows taking care of both cases we refer to~\cite{BergstraM2011a}. 
Meadows were introduced in~\cite{BergstraT2007} as a tool in an effort to develop concise algebraic specifications 
of the rational numbers. 
In the line of that work it was recently shown in~\cite{BB2016} 
that finite algebraic extensions of the rationals admit a finite initial algebra specification, 
while leaving open the question if the adjunction of a transcendental element to the meadow of 
rational numbers leads to a structure that has a finite algebraic specification.

Having equational specifications of meadows available it is plausible to investigate 
the equational theory of fractions, and initial work on that topic 
has been carried out in~\cite{BergstraM2016}. 
Here we will consider fractions as being expressions for divisive meadows 
with division as a leading function symbol. 
As it turns out the equational theory of fractions (as terms) in meadows 
presents a range of interesting problems. 

In~\cite{BergstraM2016} the question is considered whether (closed or open) fractions 
written in the signature of divisive meadows, can be rewritten into simple fractions, 
and which particular meadows admit such a transformation. 
We will need some terminology on fractions.
\emph{Fractions} are terms over $\sigdmd=\{0,1,-,+,\cdot ,/, \}$, the signature of divisive meadows,
whose head symbol is division $\mdiv$. 
As every term $t$ is provably equal to the fraction $t/1$,
fractions are universal over all terms modulo provable equality.
A \emph{closed} fraction is a fraction without variables. 
A \emph{univariate} fraction has at most one free variable.
\emph{Simple} fractions are fractions that have no fractions as proper subterms.
A \emph{polynomial} is an expression 
built from variables and closed simple fractions, and symbols for addition, multiplication and exponentiation.
In a standardised form, a polynomial in one variable~$x$ 
is of the form
$\underline{a_n} \cdot x^n + \underline{a_{n-1}} \cdot x^{n-1} + \cdots + \underline{a_1} \cdot x + \underline{a_0}$, 
where $\underline{a_i}$ is an expression for the rational number $a_i \in \Rat$ ($0 \le i \le n$). 
A \emph{mixed} fraction is a sum of a polynomial and a simple fraction. 
In~\cite{BergstraM2016} it is shown that in the setting of meadows all fractions can be rewritten 
into sums of simple fractions. 
In addition it is shown that an arbitrary number of summands may be needed for this rewrite. 
The latter proof uses the presence of an arbitrary number of variables. 
In this paper we will consider the special case of univariate fractions, 
fractions involving a single variable at most. 
We will find that in the univariate case and working in characteristic zero,
mixed fractions suffice to express all univariate fractions. This is Theorem~\ref{thm:one:var}. 
We will refer to this fact as the universality of univariate mixed fractions for univariate fractions.


Strengthening this result, we prove in Section 4 that also in the meadow of complex numbers 
univariate mixed fractions are universal. This is Theorem~\ref{thm:one:varb}.
This result has a proof theoretic consequence|Theorem~\ref{thm:one:varc}|which covers all meadows of characteristic zero. We end the section with an application of Theorem~\ref{thm:one:varb} yielding a decidability result on finite support summation as introduced in \cite{Bergstra2016}.

\section{Preliminaries}
$\eqnsdmd$ is the set of axioms of a commutative ring with a multiplicative identity element augmented by three axioms for division given in Table \ref{axioms}.
\begin{table}[t]
\centering
\hrule
\begin{align*}
	(x+y)+z &= x + (y + z)\\
	x+y     &= y+x\\
	x+0     &= x\\
	x+(-x)  &= 0\\
	(x \cdot y) \cdot  z &= x \cdot  (y \cdot  z)\\
	x \cdot  y &= y \cdot  x\\
	1\cdot x &= x \\
	x \cdot  (y + z) &= x \cdot  y + x \cdot  z\\
	1/(1/x)&=x \\
	 (x\cdot x)/x&= x\\
	 x/y&=x\cdot(1/y)
\end{align*}
\hrule
\caption{The set $\eqnsdmd$ of axioms for devisive meadows}
\label{axioms}
\end{table}
For a term~$t$ over the signature $\sigdmd$ having one or more occurrences of precisely one variable~$x$ we sometimes write $t(x)$, 
either to explicitly indicate the occurrences of $x$ in the expression~$t$,
or, in the model, to explicitly write $t$ as a function.
We then write $t(a)$ to denote the term or value obtained 
by uniformly substituting $a$ for $x$ in $t$. 
We use $\Ratzd$ and $\Compl^d_0$ to denote the meadows of 
rational and complex numbers, respectively, with divisive notation, 
and use the same notation for their carrier sets. 
A \emph{simple fraction} is an expression of the form 
$\tfrac{p}{q}$ where $p$ and $q$ are terms without division symbol~$\mdiv$. 
Every expression without division symbol (thus involving only integer coefficients)
is provably equal (in meadows) to a polynomial in standardised form.
Thus each simple fraction can be written as $\tfrac{r}{s}$ with $r$ and $s$
polynomials with integer coefficients. 
As usual, a term $p \mdiv q$ is sometimes written $\frac{p}{q}$.
For an integer $n > 0$ we write $\fin{n}$ to denote the set $\set{1,2,\ldots,n}$.
For a rational number~$a$ we will write $\underline{a}$ to denote a term with that value in $\Ratzd$.
We take the liberty to omit underlining if no confusion is likely to arise.

\newcommand{\localcite}{\cite[Thm.~21]{BergstraM2016}}
\begin{lemma}[\localcite]\label{lem:cees}
  For each term $t$ over the signature $\sigdmd$, 
  there is a finite number of simple fractions $f_1, \ldots, f_n$
  such that $\eqnsdmd \proves t = f_1 + \cdots + f_n$.
\end{lemma}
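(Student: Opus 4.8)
The plan is to proceed by structural induction on the term $t$, showing at each step that the property ``provably equal to a finite sum of simple fractions'' is preserved by the operations of $\sigdmd$. The base cases are immediate, since $0$, $1$ and the variable $x$ are provably equal to the simple fractions $0/1$, $1/1$ and $x/1$. For a sum $t = t_1 + t_2$ the two inductive sums of simple fractions are simply concatenated, and for a negation $t = -t_1$ one pushes the sign inward using $-(a+b) = (-a)+(-b)$ and $-(p/q) = (-p)/q$, both routine consequences of the ring axioms together with $x/y = x \cdot (1/y)$; in each case the result is again a sum of simple fractions. So the whole argument reduces to two genuine combination lemmas: that products and inverses of sums of simple fractions can again be written as sums of simple fractions.

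For the product $t = t_1 \cdot t_2$, writing $t_1 = \sum_i p_i/q_i$ and $t_2 = \sum_j r_j/s_j$ by the induction hypothesis, distributivity gives $\eqnsdmd \proves t = \sum_{i,j} (p_i/q_i)\cdot(r_j/s_j)$, so it suffices to collapse a single product of simple fractions. Here I would use the standard meadow identity $(1/q)\cdot(1/s) = 1/(q \cdot s)$ (the totality\nb respecting form of ``inverse of a product'', derivable from the axioms of Table~\ref{axioms}), which yields $(p/q)\cdot(r/s) = (p\cdot r)/(q\cdot s)$, a single simple fraction. This case is therefore essentially book\nb keeping once that identity is in hand.

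The hard case, and the real obstacle, is division $t = u/v$. Using $x/y = x\cdot(1/y)$ and the product case just treated, it is enough to show that $1/v$ is a sum of simple fractions when $v = \sum_{j=1}^m r_j/s_j$. The temptation is to first collapse $v$ to a single simple fraction over the common denominator $\prod_j s_j$ and then invert; but this step is \emph{invalid} in meadows, because $1/0 = 0$ breaks the field identity $\frac{a}{b}+\frac{c}{d} = \frac{ad+cb}{bd}$ as soon as a denominator may vanish. This is exactly why sums, and not single fractions, are unavoidable. My proposed remedy is a case analysis driven by the meadow idempotents $1_{s_j} := s_j\cdot(1/s_j)$ (each provably satisfying $1_{s_j}\cdot 1_{s_j} = 1_{s_j}$ and $(1-1_{s_j})\cdot(r/s_j)=0$). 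For each subset $S \subseteq \{1,\ldots,m\}$ put $e_S = \prod_{j\in S}1_{s_j}\cdot\prod_{j\notin S}(1-1_{s_j})$; these are orthogonal idempotents with $\sum_S e_S = 1$, and multiplication by $e_S$ kills every summand of $v$ with $j\notin S$ while forcing the remaining denominators to be ``active''. On each block one may therefore legitimately combine $\sum_{j\in S} r_j/s_j$ over the common denominator $\prod_{j\in S}s_j$ and invert the resulting single fraction, giving $\eqnsdmd \proves e_S\cdot(1/v) = e_S\cdot(\prod_{j\in S}s_j \mdiv N_S)$ for a suitable numerator $N_S$. Summing over all $S$, using $\sum_S e_S = 1$, and expanding each $e_S$ (a product of the simple fractions $s_j/s_j$, hence itself a simple fraction) expresses $1/v$ as a finite sum of simple fractions. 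I expect the bulk of the work, and the part most prone to slips, to be verifying these idempotent manipulations purely equationally from Table~\ref{axioms}; the $2^m$ blow\nb up in the number of blocks is the syntactic shadow of the ``arbitrary number of summands'' phenomenon noted in the introduction.
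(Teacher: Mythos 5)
First, a point of order: the paper does not prove this lemma at all --- it is imported by citation from \cite{BergstraM2016} (Thm.~21) --- so there is no in-paper proof to measure your attempt against. Judged on its own merits, your induction is organised correctly, and the division case (the only hard one) is attacked with an idea that does work: decompose $1$ into the $2^m$ orthogonal idempotents $e_S$ built from the provable idempotents $1_{s_j} = s_j/s_j$, collapse denominators blockwise, and recombine. Your auxiliary claims are all genuinely derivable from $\eqnsdmd$: the identity $(1/q)\cdot(1/s)=1/(q\cdot s)$ is a known (but decidedly non-routine) derived law of meadows, and the idempotent facts you list are short consequences of the axioms.

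The one place where your wording glosses over a real step is the block inversion: from $\eqnsdmd \proves e_S \cdot v = e_S \cdot (N_S/P_S)$, where $P_S = \prod_{j \in S} s_j$, the conclusion $\eqnsdmd \proves e_S \cdot (1/v) = e_S \cdot (P_S/N_S)$ does not follow ``therefore'' --- equational logic gives no licence to invert underneath a multiplier $e_S$. The step is nevertheless derivable, for instance as follows. For any term $u$ with $\eqnsdmd \proves u \cdot u = u$ one has $\eqnsdmd \proves 1/u = u$: applying $1/\cdot$ to both sides of $u\cdot u = u$ and using multiplicativity shows $1/u$ is idempotent too; instantiating the axiom $(x\cdot x)/x = x$ at $u$ gives $u\cdot(1/u)=u$, and instantiating it at $1/u$ (using $1/(1/u)=u$ and the idempotence of $1/u$) gives $u\cdot(1/u)=1/u$, whence $u = 1/u$. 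Since each $e_S$ is provably idempotent, you may now apply $1/\cdot$ to both sides of $e_S \cdot v = e_S \cdot (N_S/P_S)$ and use multiplicativity, involution, and $1/e_S = e_S$ to obtain exactly the equation you asserted. (Alternatively, every individual equation occurring in your construction is valid in all zero-totalized fields, so each can be imported wholesale by the completeness of the meadow axioms for that class, the generic basis theorem of \cite{BBP2013}; the induction itself only supplies the terms.) One last nit: $e_S$ is not literally ``itself a simple fraction'' --- the factors $1 - s_j/s_j$ must first be multiplied out by distributivity, after which each resulting monomial is a product of fractions $s_j/s_j$ and collapses to a single simple fraction by multiplicativity; this further multiplies the number of summands, consistent with the blow-up you anticipate. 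With these repairs your proof stands as a self-contained alternative to the external reference the paper relies on.
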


\section{Transforming univariate fractions into mixed fractions in $\Ratzd$}
In this section we consider the transformation of univariate fractions 
over the meadow of rational numbers to a mixed form.

For a finite set $A \subseteq \Ratzd$ we define the function 
$\varphi_{\!A} : \Ratzd \to \{0,1\}$ by
\begin{align*}
  \varphi_{\!A}(x) = 1 - \dfrac{\prod_{a\in A}(x-a)}{\prod_{a\in A} (x-a)} \,.
\end{align*}
Because $\varphi_{\!A}(a) = 1$ precisely when $a \in A$, 
$\varphi_{\!A}$ is equivalent to the characteristic function of $A$.

\begin{theorem}\label{thm:one:var}
  For every univariate term $t$ over the signature~$\sigdmd$,
  there is a polynomial~$g$ and a simple fraction $f$
  such that\, $\Ratzd \models t = g + f$.
\end{theorem}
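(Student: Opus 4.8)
The plan is to reduce the problem to ordinary fraction arithmetic away from a finite ``bad set'' and then repair the finitely many points where the meadow convention $x/0 = 0$ spoils that arithmetic. By Lemma~\ref{lem:cees} I may write $t = f_1 + \cdots + f_n$ with each $f_i = p_i/q_i$ a simple fraction, so $p_i, q_i$ are division-free. Placing these over the common denominator $D = \prod_i q_i$ and setting $N = \sum_i p_i \prod_{j \neq i} q_j$, the sum agrees, as a function on $\Ratzd$, with the rational function $R = N/D$ at every point where no $q_i$ vanishes. Let $Z \subseteq \Rat$ be the finite set of rational numbers at which some $q_i$ vanishes, equivalently the rational zeros of $D$. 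For $a \notin Z$ we have $t(a) = R(a)$, while for $a \in Z$ the axiom $x/0 = 0$ forces every summand with $q_i(a) = 0$ to drop out, so $t(a)$ is the sum of the surviving terms---precisely the pointwise bookkeeping recorded by the characteristic functions $\varphi_{\!A}$---and each such $t(a)$ is a concrete rational number.

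Next I would absorb these finitely many discrepancies into the two parts of a single mixed fraction. First choose, by interpolation over $\Rat$, a polynomial $g$ with rational coefficients such that $g(a) = t(a)$ for every $a \in Z$; this is possible since $Z$ is finite and the prescribed values are rational. Then put $m = \prod_{a \in Z}(x - a)$ and take the simple fraction
\[
  f \;=\; \frac{(N - g\,D)\,m}{D\,m}\,.
\]
As a rational function $f$ equals $N/D - g = R - g$, but as a meadow term its denominator $D\,m$ vanishes at a rational point exactly when that point lies in $Z$, because the rational zeros of $D$ and of $m$ are both $Z$. The claim is that $\Ratzd \models t = g + f$.

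To verify the claim I would split on membership in $Z$. For $a \notin Z$ the value $D(a)\,m(a)$ is nonzero, so $f$ is computed by ordinary division and $f(a) = R(a) - g(a)$, whence $(g+f)(a) = R(a) = t(a)$. For $a \in Z$ the denominator vanishes, so $f(a) = 0$ by $x/0 = 0$, and therefore $(g+f)(a) = g(a) = t(a)$ by the choice of $g$. Hence $g + f$ agrees with $t$ at every rational point, and $g + f$ is a mixed fraction of the required shape.

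The conceptual heart of the argument, and the step I expect to be most delicate, is that $t$ deviates from the field value $R$ exactly on the finite set $Z$, and this deviation cannot be corrected by adjoining a separate simple fraction: a nonzero simple fraction has only finitely many zeros, so it can never be supported on a finite set. The discrepancy must instead be split across both components simultaneously. The multiplication by $m$ is the decisive device, since it leaves $f$ unchanged as a rational function yet enlarges the zero set of the meadow denominator to all of $Z$, forcing $f$ to contribute $0$ at every bad point and letting $g$ supply the correct value there. The one point that genuinely needs care is the identity between the rational zeros of $D\,m$ and the set $Z$, which guarantees that the two cases above are exhaustive and mutually exclusive and which rests on the rational zeros of $D$ being exactly the rational zeros of the original denominators $q_i$.
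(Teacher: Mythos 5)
Your argument reproduces the skeleton of the paper's own proof: decompose $t$ by Lemma~\ref{lem:cees}, pass to the common-denominator fraction (your $N/D$ is the paper's $u'/q$), note that $t$ and $N/D$ can differ only on the finite set of rational roots of the denominators, interpolate a polynomial $g$ through the values of $t$ on that set, and let $x/0=0$ force the fraction part to vanish exactly there, so that $g$ takes over at the bad points. Two remarks on the construction itself. First, your factor $m$ is redundant: you define $Z$ to be exactly the rational zeros of $D$, so the denominator $D$ already vanishes precisely on $Z$, and $(N-g\cdot D)/D$ does the same job; indeed the paper's final fraction is $(u'-q\cdot g)/q$. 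Second, finiteness of $Z$ is not automatic: some $q_i$ may vanish identically on $\Rat$ (e.g.\ $q_i = x-x$), in which case $Z=\Rat$ and neither $m$ nor the interpolating $g$ exists. The paper inserts a without-loss-of-generality step discarding summands that are identically zero; your proof needs it too.

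The genuine gap is at the very end, where you declare $g+f$ to be ``a mixed fraction of the required shape.'' The theorem requires $f$ to be a \emph{simple} fraction, i.e.\ a term $p/q$ in which $p$ and $q$ contain no division symbol. Your $f=((N-g\cdot D)\cdot m)/(D\cdot m)$ is not of this form: the elements $a\in Z$ and the coefficients of $g$ are in general non-integer rationals, and as closed terms over $\sigdmd$ these are themselves fractions (e.g.\ $\underline{11/7}$), so both the numerator and the denominator of your $f$ contain division symbols. This is exactly the issue the paper spends its final displayed computation and its appendix on: rewrite $g$ into the standardised form $c_{k-1}\cdot x^{k-1}+\cdots+c_1\cdot x+c_0$ with each $c_i=r_i/l$, $r_i\in\zz$, $l\in\Natpos$, and then multiply numerator and denominator of the fraction by $l$, using the cancellation law ($l\neq 0 \Rightarrow l/l=1$, which holds in $\Ratzd$), so that all coefficients become integers and the resulting term is literally a simple fraction. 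Your construction needs the same finishing step (applied also to the rational constants occurring in $m$, or after dropping $m$ altogether); without it, what you exhibit is the sum of a polynomial and a fraction that is not simple, which is not the statement being proved.
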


\begin{proof}
  Let $t$ be a term as stated.
  From Lemma~\ref{lem:cees}
  we know that $t$ can be written 
  as a sum of simple fractions
  \begin{align*}
    t' \defd \dfrac{p_1}{q_1} + \dfrac{p_2}{q_2} + \cdots + \dfrac{p_n}{q_n}
  \end{align*}
  such that $\eqnsdmd \proves t = t'$ and so $\Ratzd \models t = t'$.
  Without loss of generality, we may assume each of the summands of $t'$ to be nonzero,
  that is, for every $i \in \fin{n}$, 
  there exist $u,v \in \Ratzd$ such that 
  $\Ratzd \models p_i(u) \ne 0 \wedge q_i(v) \ne 0$. 
  Hence, for every $i \in \fin{n}$, $p_i$ and $q_i$ 
  are non-trivial polynomials with integer coefficients (of the single variable~$x$).

  Now let $u$ denote the following term 
  that results from the ordinary way of turning $t'$ into a simple fraction:
  \begin{align*}
    u \defd \dfrac{\sum_{i = 1}^{n}(p_i \cdot \prod_{j = 1}^{i-1} q_j \cdot \prod_{j = i+1}^n q_j)}{\prod_{j = 1}^n q_j}
  \end{align*}
  Clearly, $u$ is a simple fraction, and so if $\Ratzd \models t = u $ we are done. This need not be the case, however,
  e.g., with $t = 1/x + 1/1$ one obtains $u = (1+x)/x$ and then $\Ratzd \not\models t = u $. Note that we may have 
  $\Ratzd \models t = u$ while $\Complzd \not\models t = u$, for instance with $t= 1/(x^2-2) + 1/1$ one obtains $u = (x^2-1)/(x^2-2)$.
  
    
Now assuming that $\Ratzd\not  \models t = u $ it must be the case that $q_i$ takes value 0
  for some~$i$. 
  More precisely, for every $a$ with $\Ratzd \not \models t(a) =u(a)$
  there exists $i \in \fin{n}$ 
  such that $\Ratzd \models q_i(a) = 0$. 

  Being a non-trivial univariate polynomial the equation $q_i = 0$ has only finitely many solutions.
  For $i \in \fin{n}$ we let $A_i$ denote the set of solutions of $q_i$,
  \begin{align*}
    A_i \defd \set{\,a \in \Ratzd \where \Ratzd \models q_i(a) = 0\,}\,. 
  \end{align*}
  Moreover, we define $A \defd \Union{i\in\fin{n}} A_i$, 
  and let $a_1,a_2,\ldots,a_k$ be an enumeration of $A$ without repetition.
  Note that 
  we have $t (a) \ne u(a)$ only if $a \in A$,
  and also that $a \in A$ implies $u(a) = 0$. Furthermore,
  for $i \in [k]$ we let 
  \begin{align*}
    b_i \defd 
    \dfrac{t(a_i)}{\prod_{j=1}^{i-1} \left(a_i - a_j\right) \cdot \prod_{j=i+1}^{k} \left(a_i - a_j\right)} \,.
  \end{align*}

  By minimality of $\Ratzd$ and \cite[Thm.~26]{BergstraM2016}
  we find for all $i \in \fin{k}$ 
  closed simple fractions $\cterm{a_i}$ and $\cterm{b_i}$.
   We will now define a polynomial~$g$ such that 
  \begin{align} 
    \Ratzd \models t  = u +  \varphi_{\!A} \cdot g \,.
    \label{eq:gg}
  \end{align}
  We define polynomials~$g_i$ for every $i \in \fin{k}$,
  and $g$ as follows:
  \begin{align*}
    g_i \defd \cterm{b_i} \cdot {\prod_{j=1}^{i-1} \left(x - \cterm{a_j}\right) \cdot \prod_{j=i+1}^{k} \left(x - \cterm{a_j}\right)} 
    &&
    g \defd \sum_{i=1}^k g_i \,.
  \end{align*}
  We verify that the identity~\eqref{eq:gg} holds.
  We have, in $\Ratzd$,
  \begin{align*}
    g_i(a_j) = 
    \begin{cases}
      0 & \text{if $i \ne j$,} \\
      t(a_i) & \text{if $i = j$,}
    \end{cases}
  \end{align*}
  and hence $g(a) = t(a)$ whenever $a \in A$.
  For $a\in \Ratzd$ we distinguish two cases:
  If $a \in A$ we find $ u(a) = 0$, 
  and $\varphi_{\!A}(a) \cdot g(a) = 1 \cdot t(a) = t(a)$.
  If $a \not\in A$ we see $t(a)=u(a) $ and also $\varphi_{\!A}(a) \cdot g(a) = 0 \cdot g(a) = 0$.
  
We will use the following abbreviations:
  \begin{align*}
    e \defd \prod_{i = 1}^k(x - \cterm{a_i}) \,,
    && q \defd \prod_{j = 1}^n q_j \,, 
    && u' = {\sum_{i = 1}^{n}\left(p_i \cdot \prod_{j = 1}^{i-1} q_j \cdot \prod_{j = i+1}^n q_j\right)} \,.
  \end{align*}
  It follows that $u = \tfrac{u'}{q}$.
  Now, working in~$\Ratzd$ we find:
  \begin{align*}
    t 
    & = u + \varphi_{\!A} \cdot g \\
    & = u + \left(1 - \dfrac{e}{e}\right) \cdot g \\
    & = \dfrac{u'}{q} - \dfrac{e\cdot g}{e}  + g \\
    & = \dfrac{u' \cdot e - q \cdot e \cdot g}{q \cdot e} + g\\
    &= \dfrac{u'  - q  \cdot g}{q } + g
  \end{align*}
  where the last two steps are justified by noticing that
  $q(x)$ and $e(x)$ both have the set~$A$ as their set of roots.

Since $g_i$ is a polynomial for every $i\in [k]$, the sum $g$ is a polynomial as well. In the appendix we show how $g$ can be rewritten to a standardised form
\[
c_{k-1}\cdot x^{k-1} + c_{k-2}\cdot x^{k-2} + \cdots + c_1\cdot x _1 + c_0
\]
where every $c_i$ is of the form $r_i/l$ with $r_i \in \zz$ and $l\in \Natpos$ (see also Example~\ref{example2}).
Using the cancellation axiom ($l\ne 0 \Rightarrow l/l=1$), it follows that $\Ratzd$ admits 
  the fraction~$(u'  - q \cdot g)/q$ to be rewritten into a simple fraction:
  \begin{align}
    \dfrac{u'  - q  \cdot g}{q }
    & = \dfrac{u' \cdot l - q \cdot  g \cdot l}{q \cdot l} \notag \\
    & = \dfrac{u' \cdot l- q   \cdot( c_{k-1}\cdot x^{k-1} + c_{k-2}\cdot x^{k-2} + \cdots + c_1\cdot x _1 + c_0) \cdot l }{q \cdot l}  \notag\\
    & = \dfrac{u' \cdot l- q \cdot (r_{k-1}\cdot x^{k-1} + r_{k-2}\cdot x^{k-2} + \cdots + r_1\cdot x _1 + r_0)}{q \cdot l} 
  \end{align}
\end{proof}

\begin{example}\label{example2}
We consider the case
\[
t= \dfrac{1}{x^2 +3x} + \dfrac{2x+5}{x^5 +1} + \dfrac{x^3 +2}{3x^2 - 7} .
\]
Then
\[
\begin{array}{rclcrclcrcl}
p_1& =& 1&\hspace{1cm}& p_2&=&2x +5& \hspace{1cm}&p_3&=&x^3 +2\\
q_1& =& x^2+3x&& q_2&=&x^5+1&& q_3&=&3x^2-7
\end{array}
\]
and hence
 \begin{align*}
u
    & = \dfrac{p_1q_2q_3 + p_2q_1q_3 + p_3q_1q_2}{q_1 q_2q_3} \\
    & = \dfrac{(x^5+1)(3x^2 - 7) + (2x+5)(x^2+3x)(3x^2-7) + (x^3 +2)(x^2 +3x)(x^5 +1)}{(x^2 +3x)(x^5+1)(3x^2 -7) }
   \end{align*}
Observe that for $x=0$ we have $t= 5 - 2/7$ whereas $u=0$.  The sets of roots are respectively $A_1=\{-3,0\}$, $A_2=\{-1\}$ en $A_3=\emptyset$. We choose the enumeration $a_1=-3$, $a_2=0$ en $a_3=-1$. It follows that 
\[
\begin{array}{rcl}
b_1& =& \dfrac{t(a_1)}{(a_1 - a_2)(a_1-a_3)}=\dfrac{1}{6}( \dfrac{1}{242} - \dfrac{5}{4})=-\dfrac{201}{968},\\
b_2& =& \dfrac{t(a_2)}{(a_2 - a_1)(a_2-a_3)}=\dfrac{1}{3}( 5 - \dfrac{2}{7})= \dfrac{11}{7},\\
b_3& =& \dfrac{t(a_3)}{(a_3 - a_1)(a_3-a_2)}=-\dfrac{1}{2}( -\dfrac{1}{2} - \dfrac{1}{4} )= \dfrac{3}{8}.\\
\end{array}
\]
We therefore obtain
  \begin{align*}
  g
    & = g_1 + g_2 + g_3 \\
    & = -\dfrac{201}{968}(x-0)(x+1) + \dfrac{11}{7}(x+3)(x+1) + \dfrac{3}{8}(x+3)(x-0)\\
    &=\dfrac{5891}{3388}x^2 +\dfrac{24404}{3388}x + \dfrac{15972}{3388}\
  \end{align*}
and the latter polynomial is in standardised form (see also the appendix). For this particular form we have $l=3388$ and $r_0=15972$, $r_1=24404$, $r_2= 5891$. Filling in the appropriate values in (2) yields the simple fraction
\[
\dfrac{u' - q \cdot g}{q}= \dfrac{u' \cdot 3388- (x^2 +3x)(x^5+1)(3x^2 -7) \cdot (5891\cdot x^{2} + 24404\cdot x  + 15972)}{(x^2 +3x)(x^5+1)(3x^2 -7) \cdot 3388}
\] 
where $u'=(x^5+1)(3x^2 - 7) + (2x+5)(x^2+3x)(3x^2-7) + (x^3 +2)(x^2 +3x)(x^5 +1)$.
 \end{example}

The transformation method outlined above is a computable process, since 
finding an enumeration of the different roots of the polynomials $q_i$ is a computable task. 

As a consequence of Theorem~\ref{thm:one:var} in combination with the previous remark 
we find that it is decidable whether or not a univariate fraction $t$ is equal to a simple fraction in $\Ratzd$. 
To see this first write $t$ as $f + g$ with $f$ a simple fraction (say $f = u/v$) and $g$ a polynomial. 
We may assume that $f$ and $g$ are both nonzero otherwise the issue is settled 
and a single simple fraction suffices to express $t$. 

Now consider the set $A$ of rational roots of $v$. 
We distinguish two cases:  if all values in $A$ are also roots of $g$ then 
\begin{align*}
  \Ratzd \models g =  \frac{g \cdot \prod_{a\in A}(x-a)}{\prod_{a\in A} (x-a)}  
\end{align*}
and we find that $t$ equals a simple fraction:
\begin{align*}
  t = \frac{u \cdot \prod_{a\in A} (x-a) + v \cdot g \cdot \prod_{a\in A} (x-a)}{v \cdot \prod_{a\in A} (x-a)} \,.  
\end{align*}
In the second case some $a \in A$ is not a root of $g$. 
Then $f$, viewed as a real function in $x,$ must be discontinuous in $a$. 
Because $g$ is continuous everywhere $t$ viewed as a real function is discontinuous in $a$, 
and at the same time it is non-zero in $a$. 
This combination of facts excludes that $t$ is equal to a simple fraction, 
say $f = u/v$ because $f$ is only discontinuous in zeroes of $v$ where the value of $f$ must be 0.

\section{Working in the complex numbers}
In this section we will briefly sketch a modified version of the result which works in the meadow of
complex numbers. 
This is a stronger result than Theorem~\ref{thm:one:var}. 
The value of having both proofs is that
the methods differ somewhat and that in some cases the method implicit in 
Theorem~\ref{thm:one:var} yields simpler expressions as a result. 
For instance, $1/(x^2+1)+1/(x^2+2) = (2 x^2 + 3)/( (x^2+1)\cdot(x^2+2))$ 
is the correct outcome for the method implicit in 
Theorem~\ref{thm:one:var} but not for Theorem~\ref{thm:one:varb} (see also Example~\ref{example3}).

\begin{lemma}\label{lemma}
Let $r(x)$ be an irreducible polynomial and let $R(x)$ be a $\sigdmd$ expression.
Then there is a polynomial $s(x)$ such that $\Complzd, \models r(x) = 0 \to s(x) = R(x)$. 
\end{lemma}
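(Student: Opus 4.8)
The plan is to exploit the Galois structure of the root set of $r$ together with the fact that $R$ is assembled only from the operations of $\sigdmd$. Since we work in characteristic zero and $r$ is irreducible, $r$ is separable and has $d = \deg(r)$ distinct roots $\alpha_1,\ldots,\alpha_d \in \Complzd$. Each root generates a field $\Rat(\alpha_i)$ of degree $d$ over $\Rat$ with $\Rat$-basis $1,\alpha_i,\ldots,\alpha_i^{\,d-1}$, and because $r$ is the common (irreducible) minimal polynomial of all of them, there are $\Rat$-isomorphisms $\sigma_i \colon \Rat(\alpha_1)\to\Rat(\alpha_i)$ fixing $\Rat$ pointwise and sending $\alpha_1 \mapsto \alpha_i$.

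First I would fix the root $\alpha := \alpha_1$ and observe that $\Rat(\alpha)$ is closed under every operation of $\sigdmd$: it is a field, and the totalization convention $y/0 = 0$ keeps the result inside $\Rat(\alpha)$, so the interpretation of every subterm of $R$ stays in $\Rat(\alpha)$. Consequently the value $R(\alpha)$ lies in $\Rat(\alpha)$ regardless of whether any division by zero is triggered during the evaluation. Writing $R(\alpha) = c_0 + c_1\alpha + \cdots + c_{d-1}\alpha^{\,d-1}$ with uniquely determined $c_i \in \Rat$, I set $s(x) := c_0 + c_1 x + \cdots + c_{d-1}x^{d-1}$. By minimality of $\Ratzd$ and \cite[Thm.~26]{BergstraM2016} each $c_i$ is a closed simple fraction, so $s$ is a genuine polynomial, and by construction $s(\alpha) = R(\alpha)$.

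The remaining step is to transport this single equality to all the conjugate roots. The crucial observation is that each $\Rat$-isomorphism $\sigma_i$ respects the totalized division: a field isomorphism sends $0$ to $0$, hence commutes with inversion even under the meadow convention, and therefore commutes with the interpretation of every $\sigdmd$ term. Applying $\sigma_i$ to $R(\alpha) = s(\alpha)$, using that $\sigma_i$ fixes the rational coefficients of $s$ and sends $\alpha$ to $\alpha_i$, yields $R(\alpha_i) = \sigma_i(R(\alpha)) = \sigma_i(s(\alpha)) = s(\alpha_i)$ for every $i$. Since the solutions of $r(x)=0$ in $\Complzd$ are exactly $\alpha_1,\ldots,\alpha_d$, this gives $\Complzd \models r(x)=0 \to s(x) = R(x)$, as required.

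The step I expect to be the main obstacle is precisely the interplay between the partiality-avoiding convention $y/0=0$ and the algebraic argument: one must check both that division by zero never produces a value outside $\Rat(\alpha)$ and that the conjugating isomorphisms still commute with $R$ even when some subterm of $R$ divides by zero at $\alpha$. Both facts reduce to the single point that a field isomorphism preserves the meadow structure exactly, but this is the place where the reasoning could silently fail if the totalization were overlooked.
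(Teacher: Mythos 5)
Your proof is correct, and it takes a genuinely different route from the paper's. The paper argues by structural induction on $R$, the essential case being $R = u \mdiv v$ with $u,v$ polynomials: irreducibility of $r$ gives $\gcd(r,v)=1$ (when $r$ is not a factor of $v$), the extended Euclidean algorithm produces a B\'ezout identity $r\cdot v' + v\cdot r' = 1$, and $s := u\cdot r'$ then agrees with $u \mdiv v$ at every root of $r$. You instead evaluate the whole expression inside the number field $\Rat(\alpha)$ of a single root $\alpha$: closure of this subfield under totalized division puts $R(\alpha)$ in $\Rat(\alpha)$, the power basis gives the coefficients of $s$, and the conjugation isomorphisms (which, as you check, commute with meadow division because injective field maps fix $0$ and preserve nonzeroness) transport $s(\alpha)=R(\alpha)$ to the remaining roots. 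What the paper's approach buys is an elementary and directly effective construction of $s$, which matters because the lemma feeds into Theorem~\ref{thm:one:varb} and then into the decidability result Theorem~\ref{thm:two:varb}; what yours buys is uniformity (nested divisions and the degenerate case $v(\alpha)=0$ need no case distinction, since closure of $\Rat(\alpha)$ absorbs them) together with the explicit degree bound $\deg s < \deg r$. Your construction is in fact also effective, since the coordinates $c_i$ can be computed by arithmetic in $\Rat[x]/(r)$, but inversion there is exactly the extended Euclidean step, so the paper's argument is the computational kernel hiding inside yours. The two facts you leave as observations (closure of $\Rat(\alpha)$ under the $\sigdmd$ operations, and commutation of $\sigma_i$ with the interpretation of every term) are routine structural inductions, and you correctly flag them as the precise places where the convention $y/0=0$ must be verified.
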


\begin{proof}
    The lemma is shown by induction on the structure of  $R$, 
    the only interesting case being $R(x) = u(x)/v(x)$ for polynomials 
    $u(x)$ and $v(x)$ with $v(x)$ nonzero. 
    We assume that $r(x)\neq v(x)$; otherwise we put $s(x)=0$.
   As $r(x)$ is irreducible the gcd of $r(x)$ and $v(x)$ equals $1$ and the Euclidean algorithm 
    provides polynomials $r'(x)$ and $v'(x)$ such that 
    $\Complzd  \models r(x) \cdot v'(x) + v(x) \cdot r'(x) = 1$.
    In this case we take $s(x) = u(x)\cdot r'(x)$. 
    Then if $\Complzd  \models r(a) = 0$ it must be the case that 
    $\Complzd  \models v(a) \cdot r'(a) = 1$ and thus $\Complzd  \models r'(a) = 1/v(a)$ 
    so that $\Complzd\models s(a) = u(a) /v(a) = R(a)$. Because this works for all $a$ in $\Complzd$ we have
    $\Complzd \models r(x) = 0 \to R(x) = s(x)$.
\end{proof}

\begin{theorem}\label{thm:one:varb}
  For every univariate term $t$ over the signature~$\sigdmd$,
  there are a polynomial $g$ and a simple fraction $f$
  such that $\Compl^d_0 \models {t = g + f}$.
\end{theorem}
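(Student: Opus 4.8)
The plan is to follow the architecture of the proof of Theorem~\ref{thm:one:var}, but to replace every step that names an individual root of a denominator by one that operates uniformly on the root set of a rational irreducible factor; this is forced on us because over $\Complzd$ the roots of a polynomial with rational coefficients are in general not the value of any closed $\sigdmd$-term. As before, I would first invoke Lemma~\ref{lem:cees} to write $t$ as a single simple fraction $u = u'/q$, with $u'$ and $q = \prod_i q_i$ polynomials with integer coefficients. For every $a \in \Complzd$ that is not a root of $q$ all divisions in $t$ are genuine, so $\Complzd \models t(a) = u(a)$; and for every root $a$ of $q$ we have $u(a) = u'(a)/0 = 0$. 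Hence it again suffices to produce a polynomial $g$ with rational coefficients such that $\Complzd \models g(a) = t(a)$ at every root $a$ of $q$, together with a factor that is $1$ on those roots and $0$ elsewhere.

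For the latter I take $A$ to be the set of all complex roots of $q$. Although the individual elements of $A$ are not expressible, the product $\prod_{a \in A}(x-a)$ equals the monic radical (squarefree part) $\tilde q$ of $q$, which has rational coefficients; thus $\varphi_{\!A} = 1 - \tilde q/\tilde q$ is a genuine $\sigdmd$-expression, and $\Complzd \models \varphi_{\!A}(a) = 1$ when $q(a)=0$ and $\varphi_{\!A}(a) = 0$ otherwise, using $0/0=0$. For the polynomial $g$ I would factor $\tilde q = r_1 \cdots r_m$ into distinct monic irreducibles over $\Rat$, and apply Lemma~\ref{lemma} to each $r = r_j$ with $R = t$, obtaining polynomials $s_j$ with $\Complzd \models r_j(x) = 0 \to s_j(x) = t(x)$; that is, $s_j$ agrees with $t$ at the (complex) roots of $r_j$.

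To assemble the $s_j$ into a single $g$ I would invoke the Chinese Remainder Theorem in $\Rat[x]$: since the $r_j$ are pairwise coprime there is a $g \in \Rat[x]$ with $g \equiv s_j \pmod{r_j}$ for every $j$. As distinct monic irreducibles have no common complex root, each root $a$ of $q$ is a root of exactly one $r_j$, and there $g(a) = s_j(a) = t(a)$; hence $\Complzd \models g(a) = t(a)$ for all roots of $q$. The two-case verification then gives $\Complzd \models t = u + \varphi_{\!A} \cdot g$, and---repeating the manipulation of Theorem~\ref{thm:one:var} with $\tilde q$ in place of $e$, using that $q$ and $\tilde q$ share their root set---this rewrites to $g + (u' - q \cdot g)/q$. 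Clearing the common denominator $l$ of the rational coefficients of $g$ turns $(u' - q\cdot g)/q$ into a simple fraction $f$, so that $\Complzd \models t = g + f$.

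The main obstacle is exactly the construction of $g$: in $\Ratzd$ one names each root $a_i$ by a closed fraction $\cterm{a_i}$ and interpolates directly, whereas over $\Complzd$ the roots are genuinely unavailable as terms, so the interpolation must be performed modulo the rational irreducible factors. Lemma~\ref{lemma} provides the per-factor interpolants and the Chinese Remainder Theorem glues them; the crucial bookkeeping is that every polynomial produced---the $s_j$, their B\'ezout cofactors, and $g$ itself---retains rational coefficients and is therefore $\sigdmd$-expressible. The remaining points, namely that $\varphi_{\!A}$ behaves as a characteristic function and that the fraction rewriting remains a valid meadow identity even where $q$ vanishes, are routine consequences of the $0/0 = 0$ convention.
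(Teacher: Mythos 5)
Your proposal is correct and follows essentially the same route as the paper: reduce $t$ to a sum of simple fractions via Lemma~\ref{lem:cees}, form the naive single fraction $u = u'/q$, build a characteristic function of the roots of $q$ from a rational polynomial sharing its root set, and use Lemma~\ref{lemma} on the rational irreducible factors to obtain a rational-coefficient polynomial $g$ agreeing with $t$ on those roots, before rewriting $u + \varphi\cdot g$ as a polynomial plus a simple fraction. The only real difference is how $g$ is assembled: the paper applies Lemma~\ref{lemma} to $R = t/h_i$ and takes $g = \sum_i h_i\cdot s_i$, whereas you apply it to $R = t$ and glue the resulting $s_j$ via the Chinese Remainder Theorem in $\Rat[x]$ --- the same interpolation packaged differently, since the paper's sum is precisely an explicit CRT solution with the modular inverses absorbed into the lemma's B\'ezout step.
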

   
\begin{proof}
  Let $t$ be a term over $\sigdmd$ with zero or more occurrences of a variable~$x$.
  Using Lemma~\ref{lem:cees}
  $t$ can be written as a sum of simple fractions,
  \begin{align*}
    \eqnsdmd  \vdash {t = \mfrac{p_1}{q_1} + \mfrac{p_2}{q_2} + \cdots + \mfrac{p_n}{q_n}} \,,
  \end{align*}
  where, without loss of generality, unless
  $\eqnsdmd  \vdash {t =0},$ each of the summands is assumed to be nonzero.
  Thus for every $i \in [n]$, there exists $a \in \Complzd$ such that 
  $p_i(a) \ne 0$ and $q_i(a) \ne 0$. 
  We may assume that all $p_i$ and $q_i$ ($i \in [n]$) are non-trivial polynomials 
  in~$x$ with integer coefficients.
  
  Focusing on the $q_i$ these can be uniquely factorised into products of irreducible polynomials 
  with integer coefficients.
  Therefore for $i \in [n]$ we may choose irreducible (over $\Rat_0$) and univariate polynomials 
  $q_{i,1},\ldots,q_{i,l_i}$ with integer coefficients such that 
  for appropriate natural numbers $f_{i,j}>0$,
  $\eqnsdmd  \vdash q_i = (q_{i,1})^{f_{i,1}} \cdots (q_{i,l_i})^{f_{i,l_i}}$. 
  Now we consider any enumeration without repetition $r_i, i \in [m]$ 
  of all irreducible polynomials just mentioned as factors of any of the $q_i$. 
  We write $e(i)$ for the sum over all $j \in [n]$ of the exponents 
  of the polynomial $r_i$ in the factorisation of $q_j$.
  It follows from this construction that $e(i) \geq 1$ for all $i \in [m]$.
  Following the proof of Theorem~\ref{thm:one:var}, 
  $u$ is introduced as follows:
  \begin{align*}
    u \defd \mfrac{\sum_{i = 1}^{n}(p_i \cdot \prod_{j = 1}^{i-1} q_j \cdot \prod_{j = i+1}^n q_j)}{\prod_{j = 1}^n q_j}.
  \end{align*}
  
  If $\Complzd\not \models t =u$ it must be the case that $ r_i$ takes value 0 for some $i\in [m]$.
  More precisely, for every $a$ with $\Complzd \not \models t (a) =u(a)$,
  there exists a unique $i \in [m]$  
  such that $\Complzd\models  r_i(a) = 0$. 
  Unicity of $i$ follows from the fact that different irreducible and univariate polynomials 
  cannot share roots. 

  We define the expression $\varphi$ as follows:
  \begin{align*}
    \varphi(x) = 1 - \mfrac{\prod_{i=1}^{m} (r_i(x))^{e(i)}}{ \prod_{i=1}^{m} (r_i(x))^{e(i)}} 
    \,.
  \end{align*}
We will now define a polynomial~$g$
  such that 
  \begin{align}
    \Complzd \models t = u + \varphi  \cdot g 
  \end{align}


  For each $i \in [m]$ we introduce the polynomial $h_i(x)$ as follows:
  \begin{align*}
    h_i(x) \defd \prod_{j=1}^{i-1} r_j(x) \cdot \prod_{j=i+1}^{m} r_j(x) \,.
  \end{align*}
  Using Lemma~\ref{lemma} we may choose for each $i \in [m]$
  a polynomial $s_i(x)$ such that 
  \begin{align*}
    \Complzd \models r_i(x)=0 \to s_i(x) = \frac{t(x)}{h_i(x)} \,.
  \end{align*} 
  Further for $i \in [m]$ the polynomial 
  $g_i$ is defined by
  \begin{align*}
    g_i(x) \defd h_i(x) \cdot s_i(x) \,.   
  \end{align*}
  We notice that if $r_i(a) = 0$, then $h_i(a) \ne 0$. 
  Otherwise for some $j$ different from $i$ the different univariate polynomials $r_i(a)$ and $r_j(a)$ 
  are both zero, which is impossible as these are both irreducible. 
  Now we define $g$ by 
  \begin{align*}
    g(x) \defd \sum_{i=1}^k g_i(x) \,.
  \end{align*}
  (3) can be verified as follows. Either $r_i(a)\neq 0$ for all $i\in [m]$; in that case we have $t=u$ and $\varphi \cdot g = 0$. Otherwise there is a single $i$ with
 $r_i(a) = 0$ (in $\Complzd$) and hence
   $g_i(a) =  t (a) $;
  for $j \neq i$ ($j \in [m]$) we have $h_j(a) = 0$  and therefore $g_j(a) = 0$. 
  As $g_i$ is a polynomial for every $i \in [k]$ the sum $g$ is a polynomial as well.
  Thus we obtain: 
  \begin{align*}
    \Complzd \models t=  u + \varphi \cdot g 
    = u + (1 -
      \mfrac{%
        \prod_{j=1}^{m} r_j(x)
      }{%
        \prod_{j=1}^{m} r_j(x)
      }) \cdot g 
    = u + g - 
      \mfrac{%
        \prod_{j=1}^{m} r_j(x) 
      }{%
         \prod_{j=1}^{m} r_j(x)
      } \cdot g
      \,.
  \end{align*}
  Just as it was done in the proof of Theorem~\ref{thm:one:var} this expression can be rewritten 
  into a sum of a polynomial and a simple fraction. 
  Rewriting the resulting expression in such a manner that only closed subexpressions for integers 
  occur in numerators and denominators can be done in a similar manner as well. 
\end{proof}
\begin{example}\label{example3}
We return to the example mentioned in the beginning of this section. If $t=1/(x^2+1)+1/(x^2+2)$ then
\[
u= \frac{2x^2 +3}{(x^2+1)(x^2 +2)}
\]
and we find that $s_1(x)=1=s_2(x)$. It follows that $g_1(x)=h_1(x)=x^2 +2$, $g_2(x)= x^2 +1$ and hence $g(x)=2x^2 + 3$. Thus
\[
t(x)=\frac{2x^2 +3}{(x^2+1)(x^2 +2)} + (1 - \frac{(x^2+1)(x^2 +2)}{(x^2+1)(x^2 +2)}) \cdot (2x^2 +3)
\]
and the right expression can be rewritten to the mixed fraction
\[
\frac{(2x^2 +3)(1 - (x^2+1)(x^2 +2))}{(x^2+1)(x^2 +2)} + (2x^2+3).
\]
\end{example}

Using the fact that satisfaction in $\Compl^d_0$ coincides with provability from 
$\eqnsdmd \cup\ \{ \underline{n} \mdiv \underline{n} = 1 \where n > 0 \}$ 
(see~\cite{Ono1983,BB2016}),
we find that in this case also: $\eqnsdmd \cup \{\underline{n}/\underline{n} = 1 \where n>0\} \vdash t = g + f. $ 
Two assumptions  $\underline{k}/\underline{k}=1$ and $\underline{l}/\underline{l}=1$ can be encoded in a single one:
$\underline{k \cdot l}/\underline{k \cdot l}=1$, which leads to the following fact.

\begin{theorem}\label{thm:one:varc}
  For every univariate term $t$ over the signature~$\sigdmd$,
  there are a polynomial~$g$, a simple fraction~$f$, and a natural number~$n>0$
  such that 
  $\eqnsdmd \cup\, \{\underline{n}/\underline{n}=1\} \vdash t = g + f. $ 
  (Or equivalently: $\eqnsdmd \vdash \underline{n} \cdot t = \underline{n} \cdot (g + f). $)
 \end{theorem}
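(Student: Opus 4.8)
The plan is to read the result off Theorem~\ref{thm:one:varb} by combining a completeness theorem with the finiteness of equational derivations. First I would invoke Theorem~\ref{thm:one:varb} to obtain a polynomial $g$ and a simple fraction $f$ with $\Complzd \models t = g + f$. Using the quoted fact that satisfaction in $\Complzd$ coincides with derivability from $\eqnsdmd \cup \{\underline{n}/\underline{n} = 1 \mid n > 0\}$ (see \cite{Ono1983,BB2016}), this yields $\eqnsdmd \cup \{\underline{n}/\underline{n} = 1 \mid n>0\} \vdash t = g+f$. The decisive observation is then that an equational derivation is a finite object, so it can cite only finitely many instances of the axiom scheme, say those for $n \in \{n_1,\ldots,n_k\}$. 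Setting $n \defd \prod_{j=1}^{k} n_j$ (and $n \defd 1$ when $k=0$), it suffices to show that the single hypothesis $\underline{n}/\underline{n}=1$ already proves each $\underline{n_j}/\underline{n_j}=1$ over $\eqnsdmd$; replacing every hypothesis-citation accordingly turns the derivation into one from $\eqnsdmd \cup \{\underline{n}/\underline{n}=1\}$.

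To establish this encoding step I would abbreviate $e_a \defd \underline{a}/\underline{a}$ and record two identities derivable from $\eqnsdmd$ alone: idempotency $e_a \cdot e_a = e_a$, which follows directly from the cancellation axiom $(x\cdot x)/x = x$, and multiplicativity $e_{kl} = e_k \cdot e_l$, which follows from $\underline{kl} = \underline{k}\cdot\underline{l}$ together with the standard meadow identity $1/(x\cdot y) = (1/x)\cdot(1/y)$. Given a factorisation $n = n_j \cdot m_j$ with $m_j = \prod_{i\neq j} n_i$, multiplicativity gives $e_n = e_{n_j}\cdot e_{m_j}$, so the hypothesis $e_n = 1$ reads $e_{n_j}\cdot e_{m_j}=1$; multiplying by $e_{n_j}$ and using idempotency collapses this to $e_{n_j} = e_{n_j}\cdot(e_{n_j}\cdot e_{m_j}) = e_{n_j}\cdot e_{m_j} = 1$. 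Hence $\eqnsdmd\cup\{\underline{n}/\underline{n}=1\}\vdash \underline{n_j}/\underline{n_j}=1$ for every $j$, which is exactly what the replacement requires and proves the displayed first formulation.

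For the parenthetical reformulation I would prove that, for arbitrary terms $s,t$, one has $\eqnsdmd\cup\{\underline{n}/\underline{n}=1\}\vdash s=t$ iff $\eqnsdmd \vdash \underline{n}\cdot s = \underline{n}\cdot t$. The direction from the multiplicative equation to the single-hypothesis equation is immediate: from $\underline{n}/\underline{n}=1$ one gets $(1/\underline{n})\cdot\underline{n}=1$, whence $s = (1/\underline{n})\cdot(\underline{n}\cdot s) = (1/\underline{n})\cdot(\underline{n}\cdot t) = t$. The converse is the subtle point and the part I expect to be the main obstacle, since multiplying the whole equation by $\underline{n}$ does not, on its face, let one simplify an occurrence of $\underline{n}/\underline{n}$ buried inside a context. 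I would handle it semantically, passing to the idempotent component $e\cdot M$ of an arbitrary meadow $M$, where $e = (\underline{n}/\underline{n})^{M}$: the map $m\mapsto e\cdot m$ is a meadow homomorphism onto a meadow in which $\underline{n}/\underline{n}=1$ holds (using idempotency of $e$ and $e\cdot\underline{n} = \underline{n}$, the latter again from $(x\cdot x)/x = x$), so $eM \models s=t$ gives $e\cdot s = e\cdot t$ in $M$; multiplying by $\underline{n}$ and using $\underline{n}\cdot e = \underline{n}$ yields $\underline{n}\cdot s = \underline{n}\cdot t$, and since $M$ was arbitrary, completeness of equational logic delivers the derivation. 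The only genuinely delicate ingredients throughout are this component argument and the citation of $1/(x\cdot y)=(1/x)\cdot(1/y)$; everything else is routine equational bookkeeping.
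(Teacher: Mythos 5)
Your proof is correct and follows essentially the same route as the paper: invoke Theorem~\ref{thm:one:varb}, transfer $\Complzd \models t = g+f$ to derivability from $\eqnsdmd \cup \{\underline{n}/\underline{n}=1 \where n>0\}$ via the Ono/Bergstra--Bethke completeness result, use finiteness of the derivation to isolate finitely many instances, and encode them into the single hypothesis $\underline{n}/\underline{n}=1$ with $n$ the product. The paper merely asserts the encoding step and the parenthetical equivalence without proof, so your idempotency/multiplicativity computation and the $e\cdot M$-component argument are correct (and welcome) fillings-in of details the paper leaves implicit.
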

 
 In \cite{Bergstra2016} a \emph{finite support summation}  $\sum^*_x t(x)$ is introduced. This expression denotes in a meadow the sum of all $t(a)$ if only finitely many of these substitutions yield a nonzero value, and 0 otherwise. So, if $t(x)= 1 - \frac{x}{x}$ then $\sum^*_x t(x) = 1$ in $\Compl^d_0$, since $t(x)\neq 0$ only if $x=0$; however, if $t(x)$ is a nonzero complex polynomial then $\sum^*_x t(x)= 0$ since $t(x)$ has only finitely many roots in the complex numbers. In the following theorem we apply Theorem~\ref{thm:one:varb}.
\begin{theorem}\label{thm:two:varb}
  For every univariate term $t$ over the signature~$\sigdmd$,
 it is decidable whether or not 
  $\Compl^d_0 \models {\sum^*_x t(x)=1}$.
\end{theorem}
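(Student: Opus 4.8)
The plan is to combine Theorem~\ref{thm:one:varb} with the observation that, once $t$ is written as a polynomial plus a simple fraction, both the finiteness of the support of $t$ (as a function on $\Complzd$) and the value of the resulting finite sum can be read off effectively. First I would note that the construction underlying Theorem~\ref{thm:one:varb} is effective: it uses only factorisation of integer polynomials over $\Rat$ and the Euclidean algorithm from Lemma~\ref{lemma}. Hence, given $t$, I can compute a polynomial $g$ and a simple fraction $f = p/q$, with $q$ a nontrivial polynomial, such that $\Complzd \models t = g + f$. Setting $Z \defd \set{a \in \Complzd \where q(a) = 0}$, which is finite, one has for $a \notin Z$ that $t(a) = g(a) + p(a)/q(a) = (g(a)\,q(a) + p(a))/q(a)$ in the ordinary sense, while for $a \in Z$ the meadow convention $1/0 = 0$ forces $f(a) = p(a)\cdot(1/q(a)) = 0$, so that $t(a) = g(a)$.

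Next I would establish a finite-support criterion. Off the finite set $Z$ the function $t$ coincides with the rational function $(g\,q + p)/q$, whose numerator $g\,q + p$ is a polynomial; a nonzero complex polynomial vanishes at only finitely many points, so $t(a) \ne 0$ for cofinitely many $a \notin Z$ unless $g\,q + p$ is the zero polynomial. Therefore $t$ has finite support if and only if $g\,q + p$ is identically zero, a condition that is decidable by comparing coefficients. If $g\,q + p$ is not the zero polynomial, then the support is infinite, so by definition $\sum^*_x t(x) = 0 \ne 1$ and the answer is negative.

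In the remaining case $g\,q + p = 0$, the support is contained in $Z$ (for $a \notin Z$ we get $t(a) = 0/q(a) = 0$), and since $t(a) = g(a)$ for $a \in Z$ we obtain $\sum^*_x t(x) = \sum_{a \in Z} g(a)$, the sum of the polynomial $g$ over the \emph{distinct} complex roots of $q$. These are exactly the roots of the squarefree part $\hat q \defd q/\gcd(q,q')$. Writing $\alpha_1,\ldots,\alpha_d$ for them, $\sum_{j=1}^{d} g(\alpha_j)$ is a symmetric function of the $\alpha_j$ and hence a rational number, computable from the coefficients of $\hat q$ and $g$ without ever locating the roots numerically: for instance via Newton's identities for the power sums $\sum_j \alpha_j^{\,k}$, or as the trace of multiplication by $g$ on the finite-dimensional $\Rat$-algebra $\Rat[x]/(\hat q)$. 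Comparing this rational value with $1$ then decides the question.

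The main obstacle I anticipate is precisely this last step: although the relevant sum ranges over complex algebraic numbers, the crucial point is to recognise that it is a symmetric function of the roots of $q$ and is therefore a single rational number obtainable by exact arithmetic over $\Rat$, so that neither root isolation nor equality testing in $\overline{\Rat}$ is required. A secondary point demanding care is the justification of the finite-support criterion itself, which relies on the meadow behaviour of division at the zeros of $q$ and on the distinction between the function $t$ and the rational function $(g\,q+p)/q$ with which it agrees off the finite set $Z$.
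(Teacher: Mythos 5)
Your proposal is correct, and its first half runs parallel to the paper's own proof: both apply Theorem~\ref{thm:one:varb} to write $t=g+p/q$, and both split on whether the rational-function part vanishes identically, the negative case giving infinite support and hence $\sum^*_x t(x)=0$. Your formulation of that dichotomy---is $g\cdot q+p$ the zero polynomial, checked by comparing coefficients---is in fact sharper than the paper's, which distinguishes $\Complzd\models g+p/q\neq 0$ from $\Complzd\models g+p/q=0$ and argues via an $\epsilon$-neighbourhood on which $t$ is nonzero; read literally that argument does not cover terms such as $1-x/x$, which are nonzero only at roots of the denominator, where $t$ is discontinuous, and your coefficient test is precisely the reading under which the paper's case analysis is sound (and visibly decidable). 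The genuine divergence is the last step: the paper encodes the assertion that the $g$-values at the distinct roots of $q$ sum to $1$ as a first-order sentence and invokes Tarski's decidability of real closed fields, whereas you compute that sum exactly as a rational number---a symmetric function of the roots of $\hat{q}=q/\gcd(q,q')$, via Newton's identities or as the trace of multiplication by $g$ on $\Rat[x]/(\hat{q})$---and compare it with $1$. The paper's route is shorter and immediately robust under variations (it directly licenses the closing remark that $1$ may be replaced by any closed term); yours is more elementary and more explicitly algorithmic, needing only exact arithmetic over $\Rat$ rather than a decision procedure for real closed fields, and it supports the same remark since the computed rational can be compared with the value of any closed term.
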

\begin{proof}
Using Theorem~\ref{thm:one:varb} $t$ can be written as  the sum of a polynomial and a simple fraction $g+ \frac{p}{q}$.  We distinguish two cases.
\begin{enumerate}
\item $\Compl^d_0 \models g+ \frac{p}{q}\neq 0$: Then there exists a complex number $a$  and an $\epsilon$-neighbourhood of $a$ on which $t$ interpreted as a complex, continous  function must be nonzero. Hence $\Compl^d_0 \models {\sum^*_x t(x)=0}$.
\item $\Compl^d_0 \models g+ \frac{p}{q} = 0$: If $\Compl^d_0 \models q = 0$, then $\Compl^d_0 \models g= 0$ and hence $\Compl^d_0 \models {\sum^*_x t(x)=0}$. So assume $\Compl^d_0 \models q \neq 0$. Then $\Compl^d_0 \models p= -g\cdot q$. It follows that 
\[
\Compl^d_0 \models t=g+ \frac{p}{q}=g- \frac{g\cdot q}{q}= (1 - \frac{q}{q})\cdot g.
\]
Thus $t(a)\neq 0$ if and only if $a$ is a root of $q$ and $g(a)\neq 0$. From the degree of $q$ we can effectively derive the number $m$ of roots of $q$. Now observe that
\[
\begin{array}{rcll}
\sum^*_x t(x)=1 & \Leftrightarrow & \exists x_1, \ldots , x_m (& 
q(x_1) = 0 \ \wedge\  \cdots \ \wedge\ q(x_m)=0 \ \wedge \\[3mm]
&&& \bigwedge_{1 \leq i < j \leq m} \frac{x_i - x_j}{x_i -x_j} =1 \ \wedge \\[3mm]
&&&\forall y ( q(y) = 0 \rightarrow y = x_1 \ \vee \cdots \ \vee y = x_m) \ \wedge \\[3mm]
&&& g(x_1) + \cdots + g(x_m)=1)
\end{array}
\]
Decidability now follows from Tarski's decidability result of the first order theory of real closed fields.
\end{enumerate}
\end{proof}
Observe that the term 1 in the above theorem can be replaced by an arbitrary closed term.

\section{Concluding remarks} 
These results are quite specific for the case of involutive meadows, that is meadows satisfying $(x^{-1})^{-1}= x$. 
In common meadows \cite{BergstraP2015} the issues disappear because common meadows unconditionally 
satisfy $x/y + u/v = (x \cdot v + y \cdot u) /(y \cdot v)$.
In wheels \cite{Carlstroem2004}, and also in transreals \cite{ReisA2014} the issues also trivialise but in a 
somewhat different manner.  

Many related questions await further exploration. 
In particular the question to what extent these results can be generalised 
to the case of multivariate expressions is intriguing. 
We were unable to make any progress in the case of expressions with two variables, 
nevertheless it seems plausible to expect that expressions with two variables 
(that is, sums of simple fractions involving two variables) 
can all be transformed to a uniformly bounded sum of simple fractions.


\newpage

\appendix
\section{Writing polynomials to standard form}
  In the proof of Theorem~\ref{thm:one:var} the term $g$ was defined by
  \begin{align*}
    g \defd \sum_{i=1}^k g_i
    &&
    g_i \defd \cterm{b_i} \cdot {\prod_{j=1}^{i-1} \left(x - \cterm{a_j}\right) \cdot \prod_{j=i+1}^{k} \left(x - \cterm{a_j}\right)} 
    && (i \in \fin{k}) \,.
  \end{align*}
  Here we show how $g$ can be rewritten into a standardised polynomial with rational coefficients.
  Given a finite set $S$ and an integer $k \ge 0$, 
  we use $\binom{S}{k}$ to denote the set of subsets of cardinality~$k$, that is,
  $\binom{S}{k} = \set{I \subseteq S \where \card{I} = k}$.
  Clearly, $\binom{S}{k}$ has cardinality $\binom{\card{S}}{k}$.
  For example, $\binom{\fin{4}}{3} = \set{\set{1,2,3},\set{1,2,4},\set{1,3,4},\set{2,3,4}}$.
  
  Let $i \in \fin{k}$, and $H_i = \fin{k} \setminus \set{i}$.
  In $\Ratzd$ we have 
  \begin{align*}
    g_i 
    & = b_i \cdot \prod_{j \in H_i} \left(x - a_j\right) \\ 
    & = c_{i,0} \cdot x^{k-1} + c_{i,1} \cdot x^{k-2} + \cdots + c_{i,k-2} \cdot x + c_{i,k-1} \,,
  \end{align*}
  where, for $0 \le j \lt k$,
  \begin{align*}
    c_{i,j} = (-1)^j \cdot b_i \cdot \sum_{I \in \binom{H_i}{j}} \prod_{h \in I} a_h \,.
  \end{align*}
  %
 For $i\in [k]$ we let $n_i, r_i \in \mathbb{Z}$ and $m_i, s_i \in \mathbb{N}^+$ be such that 
 $b_i = \tfrac{r_i}{s_i}$ and $a_i = \tfrac{n_i}{m_i}$
  and turn the above expression for $c_{i,j}$ into one fraction 
  using that $\tfrac{s_h}{s_h} = 1$ and $\tfrac{m_h}{m_h} = 1$ for all $h \in \fin{k}$.
  We thus obtain
  \begin{align*}
    c_{i,j} = \dfrac{(-1)^j \cdot r_i \cdot \sum_{I \in \binom{H_i}{j}} \prod_{h \in I} n_h \cdot \prod_{h' \in S \setminus I} m_{h'}}{s_i \cdot \prod_{h \in H_i} m_h} \,.
  \end{align*}
  To get ready for summing up the $c_{i,j}$ (for fixed $j$ and varying $i$),
  we transform $c_{i,j}$ into one denominator common for all $i \in \fin{k}$:
  \begin{align*}
    c_{i,j} = \dfrac{(-1)^j \cdot r_i \cdot m_i \cdot \prod_{\ell \in H_i} s_\ell \cdot \sum_{I \in \binom{H_i}{j}} \prod_{h \in I} n_h \cdot \prod_{h' \in S \setminus I} m_{h'}}{s \cdot m} \,,
  \end{align*}
  where $s = \prod_{j \in \fin{k}} s_j$ and $m = \prod_{j \in \fin{k}} m_j$.
  
  Finally, summing up the polynomials $g_i$, we get 
  \begin{align*}
    g = \sum_{i \in \fin{k}} g_i 
      = c_{0} \cdot x^{k-1} + c_{1} \cdot x^{k-2} + \cdots + c_{k-2} \cdot x + c_{k-1} \,,
  \end{align*}
  where 
  \begin{align*}
    c_j & = \dfrac{(-1)^j \cdot \sum_{i\in\fin{k}} \left( r_i \cdot m_i \cdot \prod_{\ell \in H_i} s_\ell \cdot \sum_{I \in \binom{H_i}{j}} \prod_{h \in I} n_h \cdot \prod_{h' \in S \setminus I} m_{h'} \right)}{s \cdot m} \,. 
  \end{align*}

\end{document}